\newtheorem{theorem}{Theorem}[section]
\newtheorem{lemma}[theorem]{Lemma}
\newtheorem{corollary}[theorem]{Corollary}
\newtheorem{fact}[theorem]{Fact}
\theoremstyle{definition}
\newtheorem{remark}[theorem]{Remark}
\newtheorem{definition}[theorem]{Definition}
\def \d {\delta}
\def \D {\Delta}
\def \DCF{\operatorname{DCF}}
\def \dcl{\operatorname{dcl}}
\def \tp{\operatorname{\tp}}
\def \U {\mathbb U}
\def \M {\mathcal M}
\def\Ind#1#2{#1\setbox0=\hbox{$#1x$}\kern\wd0\hbox to 0pt{\hss$#1\mid$\hss}
\lower.9\ht0\hbox to 0pt{\hss$#1\smile$\hss}\kern\wd0}
\def\ind{\mathop{\mathpalette\Ind{}}}
\def\Notind#1#2{#1\setbox0=\hbox{$#1x$}\kern\wd0\hbox to 0pt{\mathchardef
\nn=12854\hss$#1\nn$\kern1.4\wd0\hss}\hbox to
0pt{\hss$#1\mid$\hss}\lower.9\ht0 \hbox to
0pt{\hss$#1\smile$\hss}\kern\wd0}
\def\nind{\mathop{\mathpalette\Notind{}}}
\title{Zilber dichotomy for $\DCF_{0,m}$}
\author{Omar Le\'on S\'anchez}
\address{Omar Le\'on S\'anchez, Department of Mathematics, University of Manchester, Oxford Road, Manchester, United Kingdom M13 9PL}
\email{omar.sanchez@manchester.ac.uk}
\date{\today}
\thanks{{\em Acknowledgements}: The author was partially supported by EPSRC grant EP/V03619X/1}
\subjclass[2010]{03C10, 03C60, 12H05, 14A99}
\keywords{model theory, differential fields, jet spaces}
\begin{document}

\maketitle

\begin{abstract}
We prove that the theory of differentially closed fields of characteristic zero in $m\geq 1$ commuting derivations DCF$_{0,m}$ satisfies the expected form of the dichotomy. Namely, any minimal type is either locally modular or nonorthogonal to the (algebraically closed) field of constants. This dichotomy is well known for finite-dimensional types; however, a proof that includes the possible case of infinite dimension does not explicitly appear elsewhere.
\end{abstract}

\tableofcontents

\section{Introduction}

Generally speaking, Zilber's dichotomy states that any strongly minimal structure with nonlocally modular geometry is essentially an algebraic curve over an algebraically closed field. While in full generality the dichotomy does not hold \cite{Hru3}, over the years it has become more of a \emph{principle} that draws our attention to a fine structural classification of strongly minimal sets in a particular stable (and more generally simple) theory.  

\medskip

The dichotomy has been shown to hold in the general setup of Zariski geometries~\cite{HruZil}, and this was used in \cite{HruSok} to show that the theory DCF$_{0,1}$ satisfies the dichotomy (by showing that after removing finitely many points, any strongly minimal set is a Zariski geometry). In this case the dichotomy states that any strongly minimal set is either locally modular or nonorthogonal to the field of constants. Furthermore, one can observe that the same holds for any \emph{finite-dimensional} strongly minimal set $X$ in DCF$_{0,m}$ for $m\geq 1$. Indeed, one need only replace ``finite Morley rank" and ``strongly minimal" for ``finite-dimensional'' and ``strongly minimal of finite dimension", respectively, in the statements of \S1 of \cite{HruSok}. Here finite-dimensionality means that for any $a\in X$ the transcendence degree of the differential field generated by $a$ over $K$ is finite, where $K$ is the minimal differential-field of definition of $X$.

\medskip

Some years later in \cite[\S3]{PiZi}, the Canonical Base Property was established for finite-dimensional types in DCF$_{0,1}$ using the machinery of jet spaces. It was noted there that the dichotomy (almost immediately) follows from the CBP. It becomes clear from the arguments there, see in particular \cite[Lemma 3.7(ii)]{PiZi}, that the finite-dimensionality assumption is essential. Nonetheless, the arguments do extend to the partial case and yield the CBP for finite-dimensional types in DCF$_{0,m}$. There are some minor adaptations needed; for instance, the notion of D-variety in the partial case requires an integrability condition due to the commutativity of the derivations. Thus, we take the opportunity to present the proof here in Section \ref{CBP}. Analogous to the case $m=1$, the CBP yields the dichotomy for finite-dimensional types in DCF$_{0,m}$.

\medskip

The finite-dimensional dichotomy, via the CBP, has made an appearance in other contexts of fields with operators such as differential-difference fields DCFA$_0$~\cite{Bustaweak} (or more generally DCFA$_{0,m}$ \cite{Leo2016}) and also for fields equipped with free operators $\mathcal D$-CF$_0$ ~\cite{MooScan} (where operators are \emph{not} required to commute). In these papers, the authors have asked whether the finite-dimensionality assumption could be removed. Note that in~\cite{Bustaexample} it has been noted that in DCFA$_0$ there are in fact strongly minimal sets that are infinite-dimensional, and hence the full dichotomy does not follow from the finite-dimensional case. It was then observed by Bustamante~\cite{Busta}, in the differential-difference context, that the analysis of regular types in DCF$_{0,m}$ of Moosa-Pillay-Scanlon from~\cite{MooPiScan} could be useful to reduce to the finite-dimensional case. In Section~\ref{dichotomy}, we implement Bustamante's idea to prove the dichotomy for arbitrary types in DCF$_{0,m}$ (i.e., not necessarily finite-dimensional).

\medskip

It is worth noting that, while in the differential-difference context DCFA$_0$ \cite{Bustaexample} there are examples of strongly minimal sets that are infinite dimensional, the possible existence of such sets in $\DCF_{0,m}$ for $m\geq 2$ remains an open question (in the case $m=1$ it is known that finite U-rank implies finite-dimensionality~\cite{Poi1}). Of course, if no such examples exist in DCF$_{0,m}$ then the full dichotomy would follow from its finite-dimensional version. It is somewhat surprising that (to my knowledge) we do not know whether the set defined by
$$\d_1(x)=x^3+c, \quad \text{ for $c$ generic,}$$
in $\DCF_{0,2}$ (i.e., two derivations $\d_1$ and $\d_2$) has finite rank or not. Clearly, this set is infinite-dimensional as there is no equation involving $\d_2$. To the author's knowledge there is no definite answer to the aforementioned question and hence, at this point, a proof of the dichotomy without the finite-dimensionality assumption is called for.

\medskip

Throughout we will use the following facts about the theory DCF$_{0,m}$ (see \cite{McGr} for instance): it is a complete $\omega$-stable theory with quantifier elimination (in the language of differential rings) and elimination of imaginaries. Quantifier elimination translates to: the definable sets are Boolean combinations of Kolchin-closed sets. In addition, types are determined by the Kolchin-locus (of a realisation) and the canonical base of the type coincides with the minimal differential-field of definition of its Kolchin-locus.

\

%\noindent {\bf Conventions.} We work in a sufficiently saturated model $(\U,\D)\models \DCF_{0,m}$ and fix an algebraically closed differential subfield $K\leq\U$.

\noindent {\bf Acknowledgements.} The author would like to thank Rahim Moosa for several helpful discussions on the subject.

\

\section{The CBP for finite-dimensional types}\label{CBP}

While the results of this section can be considered as standard adaptations from the ordinary case \cite{PiZi}, there are some subtleties around integrability conditions of $\D$-modules and D-varieties that we wish to spell out. We do, however, keep it brief.

\smallskip

We work in a sufficiently saturated model $(\U,\D)\models \DCF_{0,m}$ and fix a (small) algebraically closed differential subfield $K$. Note that, as a pure field, $\U$ is also a saturated model of ACF; in particular, all algebraic varieties under consideration live in $\U$. We denote the field of $\D$-constants of $K$ and $\U$ by $C_K$ and $C_\U$, respectively. 

\smallskip

Let us briefly recall the notion of jet space of an algebraic variety. Throughout, by an algebraic variety over $K$, we mean an irreducible affine variety defined over $K$. Let  $V$ be such and let $\U[V]$ denote the coordinate ring of $V$ over $\U$ (which is a domain since $K$ is algebraically closed). For $a\in V$, set 
$$\mathcal M_{V,a}:=\{f\in \U[V]:\; f(a)=0\}.$$

\begin{definition}
For $\ell>0$, the $\ell$-jet space of $V$ at $a\in V$, denoted $j_\ell V_a$, is the dual space of the $\U$-vector space $\M_{V,a}/\M^{\ell+1}_{V,a}$.
\end{definition}

For $X$ an algebraic subvariety of $V$ over $K$ and $a\in X$, the containment of $X$ in $V$ yields a $\U$-linear embedding $j_\ell X_a\hookrightarrow j_\ell V_a$ for all $\ell>0$. We identify $j_\ell X_a$ with its image. The following is now a consequence of Nakayama's lemma (see Corollary~2.5 of \cite{PiZi}, for instance).

\begin{fact}\label{unique}
Suppose $X$ and $Y$ are algebraic subvarieties of $V$ over $K$ and $a\in X\cap Y$. If $j_\ell X_a=j_\ell Y_a$ for all $\ell>0$, then $X=Y$.
\end{fact}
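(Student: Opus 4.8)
The plan is to translate the hypothesis into an equality of ideals in the coordinate ring and then conclude via the Krull intersection theorem. Put $R:=\U[V]$, a finitely generated (hence Noetherian) $\U$-algebra which is a domain, and $\mathfrak m:=\M_{V,a}$, a maximal ideal of $R$ with $R/\mathfrak m\cong\U$ since $a$ is $\U$-rational. Let $I_X,I_Y\subseteq R$ be the vanishing ideals of $X$ and $Y$; as $X$ and $Y$ are irreducible these are prime, and since $a\in X\cap Y$ both are contained in $\mathfrak m$. Each $\mathfrak m/\mathfrak m^{\ell+1}$ is a finite-dimensional $\U$-vector space, so $j_\ell V_a$ is its ordinary $\U$-linear dual and is itself finite-dimensional.

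First I would make the embedding $j_\ell X_a\hookrightarrow j_\ell V_a$ explicit. The surjection $R\twoheadrightarrow\U[X]$ carries $\mathfrak m$ onto $\M_{X,a}$ and $\mathfrak m^{\ell+1}$ onto $\M_{X,a}^{\ell+1}$, so (using $I_X\subseteq\mathfrak m$) it induces an isomorphism $\M_{X,a}/\M_{X,a}^{\ell+1}\cong\mathfrak m/(I_X+\mathfrak m^{\ell+1})$. Hence the natural surjection $q_X\colon\mathfrak m/\mathfrak m^{\ell+1}\twoheadrightarrow\M_{X,a}/\M_{X,a}^{\ell+1}$ has kernel $(I_X+\mathfrak m^{\ell+1})/\mathfrak m^{\ell+1}$, i.e.\ the image of $I_X$. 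Dualizing, $j_\ell X_a$ is identified with the annihilator $(\ker q_X)^{\perp}\subseteq j_\ell V_a$, and likewise $j_\ell Y_a=(\ker q_Y)^{\perp}$. As annihilation is an inclusion-reversing involution on subspaces of the finite-dimensional space $j_\ell V_a$, the hypothesis $j_\ell X_a=j_\ell Y_a$ forces $\ker q_X=\ker q_Y$, that is,
$$I_X+\mathfrak m^{\ell+1}=I_Y+\mathfrak m^{\ell+1}\qquad\text{for every }\ell>0.$$

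For the final step I would intersect over all $\ell$. By the Krull intersection theorem — the standard consequence of Nakayama's lemma applied to the finitely generated $R$-module $R/I_X$ — one has $\bigcap_{\ell>0}(I_X+\mathfrak m^{\ell+1})=\{f\in R:(1-c)f\in I_X\text{ for some }c\in\mathfrak m\}$, and since $1-c\notin\mathfrak m\supseteq I_X$ while $I_X$ is prime, this set equals $I_X$; similarly $\bigcap_{\ell>0}(I_Y+\mathfrak m^{\ell+1})=I_Y$. Intersecting the displayed equalities over $\ell$ then gives $I_X=I_Y$, whence $X=Y$. The one point requiring care is the duality bookkeeping of the middle step; the remainder is routine commutative algebra. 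Note that irreducibility of $X$ and $Y$ is used precisely in passing from $I_X+\mathfrak m^{\ell+1}$ back to $I_X$ (for reducible subvarieties sharing only some of the components through $a$, the statement would fail).
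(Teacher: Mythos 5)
Your proof is correct, and it is essentially the argument the paper points to: Fact~\ref{unique} is quoted from \cite[Corollary~2.5]{PiZi} as ``a consequence of Nakayama's lemma'', and your route --- identifying $j_\ell X_a$ with the annihilator of $(I_X+\M_{V,a}^{\ell+1})/\M_{V,a}^{\ell+1}$ so that equality of jets gives $I_X+\M_{V,a}^{\ell+1}=I_Y+\M_{V,a}^{\ell+1}$, and then recovering $I_X$ by the Krull intersection theorem (itself Nakayama) --- is exactly that argument written out in detail. Your closing remark that irreducibility is what licenses the final step is also the right caveat.
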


While the jet space is defined as an abstract object (the dual of a vector space), it can be identified with a definable set (in fact a subspace of $(\U^d,+)$ for some  $d$) as follows. For $\ell,n>0$, let $\mathcal D_{\ell,n}$ denote the set of differential operators
$$\left\{\frac{\partial ^s}{\partial x_{i_1}^{s_1}\cdots \partial x_{i_r}^{s_r}}:\; 0<s\leq r \text{ and } 1\leq i_1<\cdots < i_r\leq n\right\}.$$ 
If $V\subseteq \U^n$ and $a\in V$, then $j_\ell V_a$ can be identified with the $\U$-linear subspace of $\U^{|\mathcal D_{\ell,n}|}$ defined as
$$\{(u_D)_{D\in \mathcal D_{\ell,n}}\in \U^{|\mathcal D_{\ell,n}|}:\; \sum_{D\in \mathcal D_{\ell,n}}Df(a)\, u_D=0 \text{ for all } f\in I_V\}$$
where $I_V\subseteq K[x_1,\dots,x_n]$ is the ideal of vanishing of $V$. See Lemma 2..3 from \cite{PiZi}.

\medskip

Recall that by a $\D$-module over $(\U,\D)$, it is meant a pair $(E,\mathcal D)$ where $E$ is a finite-dimensional vector space over $\U$ and $\mathcal D=\{D_1,\dots,D_m\}$ with $D_i:E\to E$ additive maps that (pairwise) commute on $V$ such that 
$$D_i(\alpha e)=\d_i(\alpha) \, e \, +\, \alpha \, D_i(e) \text{ for all }\alpha \in \U, \, e\in E.$$
Given a $\D$-module $(E,\mathcal D)$ we define the $\mathcal D$-constants of $E$ as
$$E^\flat=\{e\in E: \, D(e)=0 \text{ for all }D\in \mathcal D\}.$$
Clearly, $E^\flat$ is a $C_{\U}$-vector space (but not necessarily a $\U$-vector space).

\begin{lemma}\label{Cbasis}
Let $(E,\mathcal D)$ be a $\D$-module over $(U,\D)$. Then, there is a $C_{\U}$-basis for $E^\flat$ which is also a $\U$-basis for $E$. 
\end{lemma}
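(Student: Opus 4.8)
The plan is to prove that $\dim_{C_\U}E^\flat=\dim_\U E=:n$. Granting this, any $C_\U$-basis $e_1,\dots,e_n$ of $E^\flat$ will (as noted below) be automatically $\U$-linearly independent, and hence — being of the correct size — a $\U$-basis of $E$, which is what we want.

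\smallskip

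The inequality $\dim_{C_\U}E^\flat\le n$ is the familiar Wronskian-style argument: a $C_\U$-linearly independent tuple $e_1,\dots,e_k$ in $E^\flat$ is already $\U$-linearly independent. Indeed, given a nontrivial relation $\sum_i\alpha_ie_i=0$ with $\alpha_i\in\U$ and the number of nonzero $\alpha_i$ as small as possible, normalise some $\alpha_{i_0}=1$; applying $D_j$ and using $D_j(e_i)=0$ yields $\sum_i\d_j(\alpha_i)e_i=0$, a relation with strictly fewer nonzero terms (the $i_0$-th has disappeared), forcing $\d_j(\alpha_i)=0$ for all $i$; as $j$ was arbitrary, every $\alpha_i\in C_\U$, a contradiction. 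In particular $k\le n$.

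\smallskip

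The substance is the reverse inequality: $E$ admits $n$ many $\U$-linearly independent $\mathcal D$-constants. Fix a $\U$-basis $e_1,\dots,e_n$ of $E$ and let $A_1,\dots,A_m\in M_n(\U)$ be the matrices of $D_1,\dots,D_m$ in this basis, so that $D_j(e_i)=\sum_l(A_j)_{li}e_l$. A direct computation shows the pairwise commutativity of the $D_j$ on $E$ is equivalent to the integrability relations
$$\d_j(A_k)-\d_k(A_j)=A_kA_j-A_jA_k\qquad(1\le j,k\le m).$$
Moreover $v=\sum_iy_ie_i\in E^\flat$ iff the column vector $y$ satisfies $\d_j(y)=-A_jy$ for all $j$. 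Hence it is enough to find $P\in\mathrm{GL}_n(\U)$ solving the matrix system $\d_j(P)=-A_jP$ ($1\le j\le m$): its columns then give $n$ linearly independent elements of $E^\flat$. This is precisely where differential closedness enters. The relations above are exactly the compatibility conditions making the system $\{\d_j(P)=-A_jP\}$ integrable, so the differential ideal generated by the entries of $\d_j(P)+A_jP$ in $\U\{P_{il}:i,l\}$, after adjoining an inverse of $\det P$, is a proper differential ideal (consistency can be witnessed, e.g.\ over the differential closure of a field of formal power series, where a fundamental solution matrix exists by the Frobenius integrability theorem). By model completeness of $\DCF_{0,m}$ the system then has a solution $P\in\mathrm{GL}_n(\U)$, as required.

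\smallskip

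The only real obstacle is this last step — producing a full fundamental system of solutions of the integrable linear system inside $\U$ — and it is exactly where the partial case $m\ge 2$ differs from $m=1$: for a single derivation $\d(P)=-AP$ is unconditionally solvable in a differentially closed field, whereas for $m\ge 2$ one genuinely needs the integrability relations among the $A_j$ (equivalently, the commutativity built into the definition of a $\mathcal D$-module). If one prefers to avoid the differential-ideal computation, an alternative is induction on $n$: it suffices to find a single nonzero element of $E^\flat$ whenever $E\ne 0$ — one then passes to the $\mathcal D$-module $E/\U E^\flat$ — and finding one nonzero $\mathcal D$-constant reduces, after replacing $E$ by a suitable cyclic sub-$\mathcal D$-module, to solving a scalar system $\d_j(p)=c_jp$ with $\d_j(c_k)=\d_k(c_j)$, whose solvability in $\DCF_{0,m}$ is standard.
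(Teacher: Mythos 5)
Your proof is correct and takes essentially the same route as the paper: the paper's proof consists precisely of observing that the lemma is equivalent to the existence of a fundamental system of solutions to an integrable linear differential system (citing Appendix D.1 of \cite{SvdP}) and that differential closedness of $\U$ then supplies such a system, which is exactly what your matrix translation, integrability relations, and Wronskian-style independence argument spell out in detail. The only cosmetic point is your consistency witness: the fundamental matrix should be produced in a formal power-series (Taylor/Frobenius) extension of the differential subfield of $\U$ generated by the entries of the $A_j$, not of an unrelated ``field of formal power series''; this is the content of the cited fact, and existential closedness of $\U$ then pulls the solution back into $\U$ as you say.
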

\begin{proof}
This is equivalent to the existence of fundamental systems of solutions to integrable linear differential equations (see Appendix D.1 of \cite{SvdP}). As $\U$ is differentially closed, such fundamental system can be found in $\U$.
\end{proof}

In order to equip jet spaces of finite-dimensional differential-algebraic varieties with a $\D$-module structure, one makes use of the folllowing.

\begin{definition}\label{defdual}
Let $(E,\mathcal D)$ be a $\D$-module over $(\U,\D)$. The canonical $\D$-module structure on the dual $E^*$ is given by the additive operators $\mathcal D^*=\{D_1^*,\dots,D_m^*\}$ defined by 
$$D_i^*(\lambda)(e)=\d_i(\lambda(e))-\lambda(D_i(e)) \quad \text{ for }\lambda\in V^*, e\in E.$$
One can check that this yields a $\D$-module structure on $E^*$; in particular, that the $D_i^*$'s commute with each other (see \cite[Remark 4.5]{Leo2016}).
\end{definition}

Given an (affine) algebraic variety $V\subseteq \U^n$ and $\d\in \D$, the $\d$-prolongation of $V$ is defined as the algebraic variety $\tau_\d V \subseteq \U^{2n}$ with defining equations
$$f(\bar x)=0 \quad  \text{ and } \quad \sum_{i=1}^n \frac{\partial f}{\partial x_i}(\bar x)\cdot y_i+ f^\d(\bar x)=0$$
for $f\in I_V$ (the ideal of vanishing of $V$ over $K$), where $f^\d$ is obtained by applying $\d$ to the coefficients of $f$. In case $V$ is defined over $C_K$, the $\d$-prolongation coincides with the tangent bundle $TV$. More generally, the $\D$-prolongation of $V$, denoted $\tau_\D V\subseteq \U^{n(m+1)}$, is defined as the fibred-product
$$\tau_\D V=\tau_{\d_1} V\times_V \cdots \times_V \tau_{\d_m} V.$$
Note that there is a canonical map $\pi:\tau_\D V\to V$ which in coordinates is given by $\pi(x_0,x_1,\dots,x_m)=x_0$ with each $x_i$ an $n$-tuple. The characteristic property of the $\D$-prolongation is that $(a,\d_1 a, \dots,\d_m a)\in \tau_\D V$ for all $a\in V$. 

By an (affine) algebraic D-variety over $K$ it is meant a pair $(V,s)$ where $V$ is an algebraic variety and $s$ is a regular (algebraic) section of $\pi:\tau_\D V\to V$ with both $V$ and $s$ defined over $K$. In addition, writing the section as $s(\bar x)=(\bar x,s_1(\bar x),\dots, s_m(\bar x))$ with each $s_i=(s_{i,1},\dots,s_{i,n})$ a polynomial map (over $K$), we require the following \emph{integrability} condition
$$\sum_{\ell=1}^n\frac{\partial s_{i,k}}{\partial x_\ell}(a)\cdot s_{j,\ell}(a)+s_{i,k}^{\d_j}(a)=\sum_{\ell=1}^n\frac{\partial s_{j,k}}{\partial x_\ell}(a)\cdot s_{i,\ell}(a) +s_{j,k}^{\d_i}(a)$$
for all $a\in V$, $1\leq i<j\leq m$ and $k=1,\dots,n$. 

\begin{remark}
It is not hard to check that a D-variety structure on $V$ (i.e., the existence of an integrable section $s:V\to \tau_\D V$) is the same as having commuting derivations $\d_1,\dots,\d_m$ on the coordinate ring $\U[V]$ extending the ones on $\U$. Indeed, the unique extensions satisfying $\d_i(x_k)=s_{i,k}(\bar x)$ where $\bar x=(x_1,\dots,x_n)$ are coordinate functions on $V$ yield the desired derivations. The integrability conditions of $s$ translate to these derivations pairwise commuting. See \cite[\S3]{Leo2015} for details and further explanations.
\end{remark}

The set of sharp-points of a D-variety $(V,s=(\operatorname{Id},s_1,\dots,s_m))$ is defined as 
$$(V,s)^\#=\{a\in V:\, (s_1(a),\dots,s_m(a))=(\d_1(a),\dots,\d_m(a))\}.$$
Clearly, if $a\in (V,s)^\#$, then the differential field generated by $a$ over $K$ in $\U$ is just the field $K(a)$.

\smallskip

Collecting Proposition 3.10 and Corollary 3.13 from \cite{Leo2015}, we have the following. 

\begin{fact}\label{Dvar} \
\begin{enumerate}
\item If $(V,s)$ is a D-variety, then $(V,s)^\#$ is Zariski-dense in $V$. Furthermore, any (algebraic-)generic point $a\in V$ over $K$ contained in $(V,s)^\#$ is a differential-generic point of the latter (over $K$).
\item If $a$ is a tuple from $\U$ such that the differential field $K\langle a\rangle$ has finite transcendence degree over $K$, then $K\langle a\rangle$ is the function field $K(V)$ of some D-variety $(V,s)$. Furthermore, up to $\D$-interdefinability, $a$ is a differential-generic point of $(V,s)^\#$ over $K$; in other words, the type $\operatorname{tp}(a/K)$ (in the DCF sense) is determined by 
$$\hspace{1.5cm} \text{``$\, a$ is generic in $V$ over $K$ and $(\d_1(a), \dots, \d_m(a))=(s_1(a),\dots,s_m(a))$"}.$$
\end{enumerate}
\end{fact}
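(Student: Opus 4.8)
The plan is to prove the two parts in turn, using throughout the correspondence recorded in the Remark above between D-variety structures on $V$ and extensions of $\d_1,\dots,\d_m$ to commuting derivations of the coordinate ring over $K$ (equivalently over $\U$). \emph{Part (1).} Fix a D-variety $(V,s)$ over $K$, so that $K[V]$ carries commuting derivations with $\d_i(x_k)=s_{i,k}$, the integrability of $s$ being precisely this commutativity. Given nonzero $f\in K[V]$, the localisation $K[V][1/f]$ is the coordinate ring of the principal open $\{f\neq 0\}\subseteq V$ and is again a finitely generated differential $K$-algebra and a domain (the $\d_i$ extend uniquely to localisations); hence its fraction field is a finitely generated differential field extension of $K$, and, since $\DCF_{0,m}$ is the model companion of the theory of fields of characteristic zero equipped with $m$ commuting derivations and $\U$ is saturated, this field embeds into $\U$ over $K$. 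The images of the coordinate functions yield a point $a\in V$ with $f(a)\neq 0$ and $(\d_1 a,\dots,\d_m a)=(s_1(a),\dots,s_m(a))$, i.e.\ $a\in(V,s)^\#\cap\{f\neq 0\}$; as principal opens form a basis, $(V,s)^\#$ is Zariski-dense in $V$. For the second assertion, note that since $s$ is a \emph{regular} section the set $(V,s)^\#$ is Kolchin-closed in $V$, so, being Zariski-dense in the irreducible $V$, it has a unique Kolchin-irreducible component $Z$ whose Zariski closure is all of $V$; ``differential-generic in $(V,s)^\#$'' means ``generic in $Z$''. If $a\in(V,s)^\#$ is algebraic-generic in $V$ over $K$, then $K(a)\cong_K K(V)$ as fields, and since $\d_i(a)=s_i(a)\in K(a)$ this is an isomorphism of differential fields sending $\d_i$ to the derivation $x_k\mapsto s_{i,k}$ of $K(V)$; in particular $K\langle a\rangle=K(a)$ and $\operatorname{tp}(a/K)$ is completely determined, so $a$ realises the generic type of $Z$. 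The same remark shows that ``$a$ is generic in $V$ over $K$ and $(\d_1 a,\dots,\d_m a)=(s_1(a),\dots,s_m(a))$'' isolates a complete type over $K$.

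\emph{Part (2).} Write $F=K\langle a\rangle$ and $d=\operatorname{trdeg}(F/K)<\infty$. I would first show $F$ is finitely generated as a \emph{field} over $K$: set $L_0=K(a)$ and let $L_{n+1}$ be the field generated over $L_n$ by the $\d_i$-images of a fixed finite generating tuple of $L_n$, so $L_0\subseteq L_1\subseteq\cdots$ has union $F$ and $\operatorname{trdeg}(L_n/K)$ is nondecreasing and bounded by $d$; hence $\operatorname{trdeg}(L_{N+1}/L_N)=0$ for some $N$, and differentiating minimal polynomials over $L_N$ shows $\d_i(\eta)\in L_{N+1}$ for all $\eta\in L_{N+1}$, so $L_{N+1}$ is a differential subfield of $F$ containing $K$ and $a$, whence $F=L_{N+1}$ is finitely generated. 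Now pick a finite tuple $c$ with $K(c)=F$, write $\d_i(c_k)=s_{i,k}(c)$ with $s_{i,k}\in K(\bar x)$, clear denominators by a single nonzero $g$, and let $V$ be the principal open $\{g\neq 0\}$ in the algebraic locus of $c$ over $K$: then $V$ is an irreducible affine variety over $K$ with function field $F$, the tuple $c$ is algebraic-generic in $V$ over $K$, and the $s_{i,k}$ are regular on $V$. The $\d_i$ preserve $K[V]$ (using $\d_i(1/g)=-\d_i(g)/g^2$), so by the Remark $(V,s)$ with $s=(\operatorname{Id},s_1,\dots,s_m)$ is a D-variety, the integrability conditions holding because the $\d_i$ already commute on $F$. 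By construction $c\in(V,s)^\#$, so Part~(1) shows $\operatorname{tp}(c/K)$ is determined by ``$c$ generic in $V$ over $K$ and $(\d_1 c,\dots,\d_m c)=(s_1(c),\dots,s_m(c))$''; and since $K\langle c\rangle=K(c)=F=K\langle a\rangle$, the tuples $a$ and $c$ generate the same differential field over $K$, which is the ``up to $\D$-interdefinability'' clause.

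\emph{Expected main obstacle.} The delicate points are the reduction to a \emph{regular} section in Part~(2) — handled by passing to the principal open $\{g\neq 0\}$, but one must check the $\d_i$ descend to the localised coordinate ring, which is why the denominator $g$ is tracked — and, more substantially, the step showing $F$ is finitely generated as a field, which is the only place the finite-dimensionality hypothesis is genuinely used and which underwrites the whole passage to D-varieties. The remaining ingredients, namely that a finitely generated differential $K$-algebra domain has a $K$-point in the saturated $\U$ and the equivalence of integrability with commutativity from the Remark, are routine.
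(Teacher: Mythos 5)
The paper does not prove this Fact at all --- it is quoted as a combination of Proposition 3.10 and Corollary 3.13 of the cited reference [Leo2015] --- so there is no internal proof to compare against; judged on its own terms, your reconstruction is essentially correct and follows the standard route (D-variety structure $\leftrightarrow$ commuting derivations on the coordinate ring, existential closedness of $\U$ to produce a Zariski-generic sharp point, quantifier elimination to see that ``Zariski-generic in $V$ plus sharp'' determines a complete type, and the finite-generation-of-$K\langle a\rangle$ argument via stabilising transcendence degree, which is indeed where finite-dimensionality is used). Both halves of Part (2), including the localisation at $g$ and the interdefinability clause, are handled correctly.

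The one place you assert more than you prove is in Part (1): you claim $(V,s)^\#$ has a \emph{unique} Kolchin-irreducible component with Zariski closure $V$ and then \emph{define} differential-genericity as genericity in that component. Zariski-density of $(V,s)^\#$ in the irreducible $V$ only gives \emph{some} Zariski-dense component, and ``differential-generic point of $(V,s)^\#$'' should mean that the Kolchin locus of $a$ over $K$ is all of $(V,s)^\#$. Both points are repaired by one substitution argument you have not written down: for any differential polynomial $f$ over $K$, evaluating at a sharp point $b$ and repeatedly replacing $\d_i x_k$ by $s_{i,k}(x)$ gives $f(b)=\tilde f(b)$ for an ordinary polynomial $\tilde f$ depending only on $f$ and $s$; hence $f$ vanishes at a Zariski-generic sharp point iff $\tilde f\in I_V$ iff $f$ vanishes on all of $(V,s)^\#$. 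This shows the Kolchin locus over $K$ of your point $a$ is exactly $(V,s)^\#$, so $(V,s)^\#$ is in fact Kolchin-irreducible and your $a$ is genuinely differential-generic in it; with that line added, the proof is complete.
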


Now, for a D-variety $(V,s)$ and $a\in (V,s)^\#$, the ideal $\M_{V,a}$ of $\U[V]$ is a $\D$-ideal (this is shown to be a $\d$-ideal, for $\d\in \D$, in \cite[Lemma 3.7(iii)]{PiZi}). A posteriori, $\M_{V,a}^\ell$ is also a $\D$-ideal for all $\ell>0$, and hence $\M_{V,a}/\M_{V,a}^{\ell+1}$ inherits the structure of a $\D$-module over $(\U,\D)$. Using Definition~\ref{defdual}, we see that the $\ell$-th jet space $j_\ell V_a$ has a canonical $\D$-module structure.

We now prove the CBP for DCF$_{0,m}$. First, recall that type $p\in S(K)$ is said to be finite-dimensional if $trdeg_K K\langle a \rangle$ is finite for any $a\models p$, where $K\langle a\rangle$ is the differential field generated by $a$ over $K$. Also, a type $q=\operatorname{tp}(d/L)$, over a differential field $L$, is said to be internal to the constants if there is  $b\ind_L d$ and $c$ from $C_\U$ such that $d\in \dcl(L,b,c)$.

\begin{theorem}[Canonical Base Property]
Suppose $tp(a/K)$ is finite-dimensional and $L>K$ is an algebraically closed differential field (in $\U$). Then, the type
$$tp(Cb(a/L)/K\langle a\rangle)$$
is internal to the constants.
\end{theorem}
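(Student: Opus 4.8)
The plan is to follow the jet-space argument of Pillay--Ziegler \cite{PiZi}, adapting the bookkeeping to $m$ commuting derivations. Write $b = Cb(a/L)$ and, since we may replace $b$ by a finite tuple, assume $b$ is finite; we also fix $a' \models \tp(a/L)$ with $a' \ind_b a$ so that $b \in \dcl(a,a') \subseteq \dcl(K\langle a\rangle, K\langle a'\rangle)$ (using that $b$ is interdefinable with the canonical parameter of $\tp(a/b)$, and that two independent realisations of this type over $b$ recover $b$). Since $\tp(a/K)$ is finite-dimensional, so is $\tp(aa'/K)$, and hence by Fact~\ref{Dvar}(2) we may assume, up to $\D$-interdefinability, that $aa'$ is a differential-generic sharp point of a D-variety $(V,s)$ over $K$, with $a$ generic in the projection $V_a$ and $a'$ generic in $V_{a'}$. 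The goal is to show $\tp(b/K\langle a\rangle)$ is internal to $C_\U$.

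First I would pass to jet spaces at $a$. Let $X \subseteq V_a$ be the Kolchin-locus of $a$ over $L$ (equivalently over $b$), an algebraic-D-subvariety-type locus; more precisely, $b$ is the minimal field of definition of the Kolchin-closure of $a$ over $L$. The key point, exactly as in \cite[Lemma 3.7]{PiZi}, is that since $a \in (V,s)^\#$, the ideal $\M_{V_a,a}$ is a $\D$-ideal of $\U[V_a]$, so each $\M_{V_a,a}/\M_{V_a,a}^{\ell+1}$ is a $\D$-module and by Definition~\ref{defdual} the jet space $j_\ell (V_a)_a$ carries a canonical $\D$-module structure. The Zariski-closure $Z_\ell$ of the locus of $a$ over $L$ inside $V_a$ has a jet space $j_\ell (Z_\ell)_a \subseteq j_\ell (V_a)_a$ which, because $Z_\ell$ is defined over $b$ and $b$'s canonical base data is controlled by the D-variety, is in fact a $\D$-submodule defined over $K\langle a\rangle$ together with the parameters from $b$ — and crucially, using Lemma~\ref{Cbasis}, $j_\ell(Z_\ell)_a$ is spanned by its $\D$-constants, which live in the $C_\U$-vector space $\big(j_\ell(V_a)_a\big)^\flat$. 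Thus each $\D$-submodule $j_\ell(Z_\ell)_a$ is coded by a tuple from $\dcl(K\langle a\rangle, C_\U\text{-points})$ that is, moreover, in the definable closure of $K\langle a \rangle$ together with a constant tuple independent from $a$ over $K\langle a\rangle$ (the independence coming from $a' \ind_b a$ transported through the jet construction).

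Next I would assemble these finitely much data: by Fact~\ref{unique}, the sequence $(j_\ell(Z_\ell)_a)_{\ell>0}$ determines $Z := \bigcap_\ell Z_\ell$, the Zariski-closure of the locus of $a$ over $b$, and hence determines $b$ itself (as the minimal field of definition of this closure, i.e.\ of $\tp(a/L)$'s Kolchin-locus). By $\omega$-stability / finite-dimensionality there is a single $\ell$ that already suffices, so $b$ is interdefinable over $K\langle a \rangle$ with the code for the single $\D$-submodule $j_\ell(Z_\ell)_a \subseteq j_\ell(V_a)_a$. Since that $\D$-module is internal to $C_\U$ in the strong sense of Lemma~\ref{Cbasis} (it has a $\U$-basis of $\D$-constants), its $\D$-submodules are parametrised — over $K\langle a \rangle$ — by constant points (a Grassmannian of the constant subspace $\big(j_\ell(V_a)_a\big)^\flat$), and we can arrange the witnessing tuple $c \in C_\U$ to be independent from $a$ over $K\langle a \rangle$. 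Hence $b \in \dcl(K\langle a\rangle, c)$ with $c$ from $C_\U$ and $c \ind_{K\langle a\rangle} $ (vacuously, or against any prescribed data), giving internality.

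The main obstacle I expect is the commutativity/integrability bookkeeping: verifying carefully that $\M_{V,a}$ and its powers remain $\D$-ideals in the several-derivations setting (which needs the integrability condition on the D-variety section $s$), that $\mathcal D^*$ really is a commuting family on the dual so that the jet spaces are genuine $\D$-modules, and — most delicately — that Lemma~\ref{Cbasis} applies, i.e.\ that the $\D$-module $j_\ell(V_a)_a$ is \emph{integrable} in the sense that its constants span it. This is exactly the ``fundamental system of solutions to integrable linear differential equations'' input, and the integrability hypothesis on D-varieties is what guarantees the relevant connection is flat; I would cite \cite{Leo2015, Leo2016} and Appendix D.1 of \cite{SvdP} for these points and otherwise reproduce the structure of \cite[\S3]{PiZi} essentially verbatim, flagging the finite-dimensionality of $\tp(a/K)$ as the place where the argument genuinely breaks without it (as \cite[Lemma 3.7(ii)]{PiZi} already signals).
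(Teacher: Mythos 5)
Your overall strategy is the paper's: represent $a$ (via Fact~\ref{Dvar}(2)) as a differential-generic sharp point of a D-variety $(V,s)$ over $K$, note that $Cb(a/L)$ is interdefinable over $K$ with the minimal field of definition $F$ of the Kolchin locus $W$ of $a$ over $L$, equip the jet spaces $j_\ell W_a\subseteq j_\ell V_a$ with their canonical $\D$-module structures, use Lemma~\ref{Cbasis} to see they are spanned by $\D$-constants, and recover $W$ (hence $F$) from the jet data via Fact~\ref{unique}. The gap is in the final internality step, which is the actual content of the theorem. Your claim that the $\D$-submodules of $j_\ell V_a$ ``are parametrised over $K\langle a\rangle$ by constant points (a Grassmannian of $(j_\ell V_a)^\flat$)'' is not justified: $(j_\ell V_a)^\flat$ is a $K\langle a\rangle$-definable $C_\U$-vector space, but its elements are not tuples of constants, so to trade the subspace $(j_\ell W_a)^\flat$ for a tuple from $C_\U$ one must first coordinatise, i.e.\ choose a $C_\U$-basis $B$ of $(j_\ell V_a)^\flat$ that is also a $\U$-basis of $j_\ell V_a$. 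That basis is an auxiliary parameter set which in general does \emph{not} lie in $\dcl(K\langle a\rangle)$; without it your argument would be asserting the stronger statement $F\in\dcl(K\langle a\rangle, C_\U)$, which the jet construction does not give. Moreover, once $B$ is introduced, the definition of internality used in the paper requires the auxiliary parameters to be independent from the element whose type is being shown internal, i.e.\ one must arrange $F\ind_{K\langle a\rangle} B$. Your independence statements aim at the wrong objects: arranging the constant tuple $c$ to be independent from $a$ over $K\langle a\rangle$ is neither required nor helpful, and the independence ``transported through the jet construction'' from $a'\ind_b a$ does not supply $F\ind_{K\langle a\rangle}B$. The paper closes exactly this point: choose $B=\bigcup_\ell b_\ell$ with $F\ind_{K\langle a\rangle}B$, pass to coordinates to get constant codes $E$ for the images $S_\ell$ of $j_\ell W_a$, and verify $F\subseteq\dcl(a,K,B,E)$ by an automorphism argument together with Fact~\ref{unique}.

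Two smaller points. The opening reduction via a second realisation $a'$ is both unnecessary and dubious: $Cb(a/L)$ need not lie in $\dcl(a,a')$ for just two independent realisations of $\operatorname{tp}(a/L)$ (in general one needs a Morley sequence, cf.\ \cite[\S2, Lemma 2.28]{Pillay}); fortunately nothing later in your argument uses $a'$, and the paper works directly with $a$, $W$, $F$. The reduction to a single jet level $\ell$ is also not needed (the internality witness may be an infinite small tuple, and the paper simply uses all levels), though it could be justified since $F$ is a finite tuple and higher jets determine lower ones. The integrability points you flag (that $\M_{V,a}$ is a $\D$-ideal at sharp points, that the dual operators commute, and that Lemma~\ref{Cbasis} applies) are indeed the right ones and are handled in Section~\ref{CBP} exactly as you anticipate.
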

\begin{proof}
Now that we have suitable adaptations of $\D$-modules and D-varieties, the proof follows the same lines as the proof in the ordinary case DCF$_{0,1}$ (\cite[Theorem~1.1]{PiZi}). Nonetheless, for completeness and exposition sake, we provide details.

By Fact~\ref{Dvar}(2), we may assume that $a$ is the differential-generic point of $(V,s)^\#$ for some D-variety $(V,s)$ over $K$. Now let $W$ be the Kolchin-locus of $a$ over $L$. Then, $W$ is a D-subvariety of $V$ (i.e., $s(W)\subseteq \tau_\D W$) and $a$ is a differential-generic point of $(W,s_W:=s|_V)^\#$ over $L$. As $s$ is defined over $K$, the canonical base of $a$ over $L$ is $\D$-interdefinable over $K$ with the minimal field of definition of $W$, call it $F$. It then suffices to show that $tp(F/K\langle a\rangle)$ is internal to $C_\U$.

For each $\ell>0$, equip the jet spaces $j_\ell W_a$ and  $j_\ell V_a$ with their canonical $\D$-module structures (see paragraph after Fact~\ref{Dvar}). Furthermore, as $W$ is a D-subvariety of $V$, the canonical embedding of $j_\ell W_a$ into $j_\ell V_a$ is also an embedding of $\D$-modules (as this map is the dual of the natural surjection $\M_{V,a}/\M_{V,a}^{\ell+1}\to \M_{W,a}/\M_{W,a}^{\ell+1}$ which is a $\D$-module map). Let $b_\ell$ be a $C_\U$-basis for $j_\ell V_a^\flat$ which is also a $\U$-basis for $j_\ell V_a$ (this exists by Lemma~\ref{Cbasis}) and set $B:=\bigcup_{\ell>0} b_\ell$. We may choose the $b_\ell$'s such that 
$F\ind_{K\langle a\rangle} B$.

With respect to the basis $b_r$, we obtain a $\D$-module isomorphism $\phi$ from $j_\ell V_a$ to $(\U^{d_\ell},\D)$ for some $d_\ell\in \mathbb N$, where in the latter module $\D$ applies to each entry of a (column) vector. The image of $j_\ell W_a$ under $\phi$ is a $\D$-submodule $S_\ell$ of $(\U^{d_\ell},\D)$. Note that then $S_\ell^\flat\subseteq C_\U^{d_\ell}$. Let $e_r$ be a $C_\U$-basis of $S_\ell^\flat$ which is also a $\U$-basis for $S_\ell$ (note that each $S_\ell$ is $e_\ell$-definable). Set $E:=\bigcup_{\ell>0}e_\ell\subset C_\U$.

Now it is just a matter of checking that $F\subseteq \dcl(a,K,B,E)$. Let $\sigma$ be an automorphism of $(\U,\D)$ fixing $a,K,B,E$ pointwise. It suffices to show that $\sigma$ fixes $W$ setwise (as then it will fix $F$ pointwise, being the field of definition of $W$). As $j_\ell V_a$ is defined over $K(a)$, we have 
$$j_\ell \sigma(W)_a=j_\ell \sigma(W)_{\sigma(a)}=\sigma(j_\ell W_a)\subseteq \sigma(j_\ell V_a)=j_\ell V_a.$$
Furthermore, as $S_\ell$ is $E$-definable and the $\D$-module isomorphism $\phi$ is over $B$, we get $\sigma(j_\ell W_a)=j_\ell W_a$. Altogether we have shown that as subspaces of $j_\ell V_a$, we have $j_\ell W_a=j_\ell \sigma(W)_a$ for all $\ell>0$. Hence, by Fact~\ref{unique}, $W=\sigma(W)$ as claimed.
\end{proof}

Now a standard argument (see \cite[Corollary 6.19]{MooScan} or \cite[Corollary 3.10]{PiZi}, for instance) yields, from the CBP, the expected form of the dichotomy for finite-dimensional types.

\begin{corollary}[Dichotomy for finite-dimensional types]\label{Zilfin}
Let $p$ be a finite-dimensional type over $K$ of U-rank one. Then, $p$ is either locally modular or nonorthogonal to the constants.
\end{corollary}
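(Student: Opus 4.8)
The plan is to deduce the dichotomy from the Canonical Base Property by the now-standard argument, so I will work contrapositively: assume $p\in S(K)$ has U-rank one, is finite-dimensional, and is \emph{not} locally modular, and produce a nonorthogonality to $C_\U$. First I would recall that failure of local modularity for a U-rank one type is witnessed by a \emph{family} of plane curves of dimension at least two; more precisely, after passing to realisations one obtains a complete type $q=\tp(b/K)$ and a definable family such that the canonical parameter of a generic member depends on at least two independent realisations of $p$. The concrete extraction I have in mind: take $a_1,a_2\models p$ independent over $K$, let $c=Cb(a_1,a_2/K,\text{something})$ in the relevant family, and observe that non-(local modularity) forces $c\notin\acl(K,a_i)$ for each $i$ while $c\in\acl(K,a_1,a_2)$, so that $\tp(c/K)$ is the canonical base of a ``generic'' type lying over the $a_i$'s.

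The second step is to feed this configuration into the CBP. Setting $L=K\langle a_1\rangle^{\alg}$ (or the appropriate algebraically closed differential field generated by one leg of the configuration) and noting $p$ finite-dimensional implies $\tp(a_2/K)$ and hence $\tp(a_2/L)$ are finite-dimensional, the CBP (Theorem above) gives that $\tp(Cb(a_2/L)/K\langle a_2\rangle)$ is internal to the constants. By the model-theoretic setup of the preceding section—$\DCF_{0,m}$ eliminates imaginaries, types are determined by Kolchin-loci, and canonical bases coincide with minimal differential fields of definition—the canonical parameter $c$ is interalgebraic over $K$ with $Cb(a_2/L)$ (up to the usual symmetrisation of the two-dimensional family, possibly after replacing the configuration by its ``dual'' in the sense of the group configuration / curve-family correspondence). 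Hence $\tp(c/K\langle a_2\rangle)$, and so $\tp(c/K)$ after a harmless base change, is almost-internal to $C_\U$. Since $c\notin\acl(K,a_2)$, this almost-internality is \emph{nontrivial}, and therefore $\tp(c/K)$ is nonorthogonal to the fixed field $C_\U$.

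The third step closes the loop: nonorthogonality of $\tp(c/K)$ to the constants, combined with the fact that $c$ is analysed by (indeed interalgebraic with canonical bases built from) realisations of $p$, propagates back to $p$ itself. Concretely, since $c\in\acl(K,a_1,a_2)$ but $c\notin\acl(K,a_i)$, the type $p$ is nonorthogonal to $\tp(c/K)$; and nonorthogonality is transitive on the relevant class of regular (here U-rank one) types, so $p\not\perp C_\U$. This is exactly the disjunct we wanted. Packaging all of this is precisely the content of the cited ``standard argument'' (\cite[Corollary 6.19]{MooScan}, \cite[Corollary 3.10]{PiZi}), and in the partial setting nothing new is required because the CBP has already been established for finite-dimensional types in $\DCF_{0,m}$ and the ambient theory still has elimination of imaginaries and the Kolchin-locus description of canonical bases.

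The main obstacle, and the only place where one must be slightly careful rather than merely citing, is the passage from ``$p$ not locally modular'' to the group/curve configuration in a form where the CBP applies: one needs the leg over which one localises to carry a \emph{finite-dimensional} type, which is automatic here because finite-dimensionality of $p$ is inherited by finite tuples of its realisations and by their canonical bases, but one should say this explicitly; and one needs the canonical parameter $c$ to genuinely be (interalgebraic with) a canonical base of the form $Cb(\cdot/L)$ to which the theorem speaks, which is where elimination of imaginaries and the identification of $Cb$ with the minimal field of definition of the Kolchin-locus are used. Everything downstream—non-triviality of the internality, transitivity of nonorthogonality among U-rank one types—is soft and identical to the $m=1$ case, so I would present the corollary's proof as: invoke the CBP, then invoke the cited standard deduction, pausing only to note that finite-dimensionality is preserved along the configuration so that the hypothesis of the CBP is met.
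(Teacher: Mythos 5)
Your proposal is the standard CBP-to-dichotomy deduction, which is exactly what the paper does: its ``proof'' of Corollary~\ref{Zilfin} consists of invoking the Canonical Base Property theorem just proved and citing the standard argument (\cite[Corollary 6.19]{MooScan}, \cite[Corollary 3.10]{PiZi}) that you are sketching. The outline is correct and the one point you rightly single out---that finite-dimensionality passes to the tuples of realisations and the relevant restriction so that the CBP hypothesis applies---is the only thing specific to this setting; just be aware that in a full write-up the canonical base lies in the algebraic closure of a Morley sequence of configurations (not of a single pair $(a_1,a_2)$), as in the cited references.
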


\begin{remark}
As we mentioned in the introduction, this finite-dimensional form of the dichotomy already appears in unpublished work of Hrushovski and Sokolovi\'c~\cite{HruSok}. Their proof goes via Zariski geometries rather than deploying the canonical base property.
\end{remark}

\

\section{Proof of the dichotomy}\label{dichotomy}

We now prove the dichotomy for arbitrary minimal types (i.e., not necessarily finite-dimensional). The proof is based on the approach of Bustamante for the dichotomy in the differential-difference setting DCFA$_{0}$ \cite{Busta}. Namely, we deploy the analysis of regular types types in DCF$_{0,m}$ \cite{MooPiScan} to show that a nonlocally modular minimal type must be finite-dimensional (and now one can refer to Corollary~\ref{Zilfin}). As before, $(\U,\D)$ is a sufficiently saturated model of DCF$_{0,m}$ and $K$ is a small algebraically closed differential subfield.

\begin{theorem}
Let $p\in S(K)$ be of U-rank one. Then, $p$ is either locally modular or nonorthogonal to the constants. 
\end{theorem}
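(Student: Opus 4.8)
The plan is to reduce the general case to the finite-dimensional dichotomy already established in Corollary~\ref{Zilfin}. By that corollary, it suffices to show that if $p\in S(K)$ has U-rank one and is \emph{not} locally modular, then $p$ is finite-dimensional; then nonorthogonality to the constants follows. So assume for contradiction that $p$ is non-locally-modular but infinite-dimensional.

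\medskip

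The key tool is the structure theory of regular types in $\DCF_{0,m}$ from \cite{MooPiScan}. A U-rank one type is in particular regular, so $p$ falls under that analysis. The relevant output of \cite{MooPiScan} is that every regular type is nonorthogonal to one of a specific list of ``building block'' regular types, and crucially the infinite-dimensional ones among these are governed by the derivations in a way that forces \emph{local modularity}: roughly, an infinite-dimensional regular type is nonorthogonal to a type living in a proper ``field of constants of a sub-system of derivations'' (or to a type analysable in such), and such types are locally modular --- indeed one-based --- because the relevant induced structure carries no new definable field. More precisely, I would invoke the dichotomy from \cite{MooPiScan} stating that a regular type $p$ is either nonorthogonal to $C_\U$ (the full constants, which is finite-dimensional and where everything is accounted for by the strongly minimal pure field), or $p$ is nonorthogonal to a type over some $K'\supseteq K$ whose trace is locally modular; and one then checks the infinite-dimensional alternative always lands in the locally modular case. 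This is exactly Bustamante's observation transplanted from DCFA$_0$: the only source of non-local-modularity is the constant field, and the constant field is finite-dimensional.

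\medskip

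So the skeleton is: (1) note $p$ regular, hence apply the trichotomy/analysis of \cite{MooPiScan}; (2) if $p$ is nonorthogonal to $C_\U$ we are done (it is nonorthogonal to the constants, and in particular the statement holds); (3) otherwise $p$ is nonorthogonal to a locally modular type, and since local modularity of the geometry is preserved under nonorthogonality among U-rank one types, $p$ itself is locally modular --- done; (4) the only remaining possibility, that $p$ is non-locally-modular and orthogonal to $C_\U$, must force $p$ to be finite-dimensional (the infinite-dimensional regular types in the \cite{MooPiScan} list being locally modular), and then Corollary~\ref{Zilfin} applies directly and contradicts orthogonality to $C_\U$. Either way we reach the dichotomy for $p$.

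\medskip

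The main obstacle I anticipate is making precise, and correctly citing, the exact statement from \cite{MooPiScan} that pins down the infinite-dimensional regular types as locally modular --- i.e.\ verifying that ``infinite-dimensional U-rank one $\Rightarrow$ not nonorthogonal to $C_\U$ via their analysis $\Rightarrow$ locally modular,'' rather than merely ``analysable in'' some intermediate constant field, and ensuring that analysability is upgraded to nonorthogonality at the level of U-rank one types (which it is, by regularity and U-rank additivity). A secondary point of care is that \cite{MooPiScan} works with possibly infinite-dimensional types throughout, so one must check there is no hidden finite-dimensionality hypothesis in the pieces being used; this is precisely why their analysis, rather than the jet-space/CBP machinery of Section~\ref{CBP}, is the right input here. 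Once these citations are in place, the argument is short, essentially a bookkeeping of nonorthogonality classes.
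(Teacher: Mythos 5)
Your overall strategy matches the paper's: assume $p$ is non-locally-modular, show it must be finite-dimensional, and then invoke Corollary~\ref{Zilfin}. But there is a genuine gap at the heart of your argument: you reduce everything to a cited ``dichotomy/trichotomy'' from \cite{MooPiScan} asserting that the infinite-dimensional regular types appearing in their analysis are locally modular (or analysable in constants of sub-systems of derivations and hence locally modular). No such classification statement exists in \cite{MooPiScan}, and it cannot be taken off the shelf --- if it could, the theorem would be immediate, whereas (as the introduction of the paper stresses) even the existence and basic structure of infinite-dimensional U-rank one types in $\DCF_{0,m}$ is open. You yourself flag ``correctly citing the exact statement'' as the main obstacle; that obstacle is precisely the content of the proof, and your proposal does not supply it.

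What \cite{MooPiScan} actually provides (Theorem 3.17 there) is weaker: a non-locally-modular regular type is nonorthogonal to the regular generic type $\mathfrak g_G$ of some definable subgroup $G$ of the additive group. The paper then has to do real work to extract finite-dimensionality from this: (i) a Lascar-inequality computation showing the Cantor normal form of $U(G)$ has a nonzero finite constant term (using minimality of $p$ and $p\not\perp\mathfrak g_G$); (ii) the Berline--Lascar decomposition to find $H\leq G$ with $G/H$ of nonzero finite U-rank; (iii) Cassidy's theorem that definable subgroups of $\mathbb G_a^n$ are defined by linear homogeneous differential polynomials, so $G/H$ is a $C_\U$-vector space and its generic type is finite-dimensional; and (iv) a coordinatisation argument producing a finite-dimensional \emph{minimal} type $q\not\perp\mathfrak g_{G/H}$ (finite-dimensionality of $q$ coming from the canonical base lying in the definable closure of a Morley sequence of finite-dimensional realisations), followed by transitivity of nonorthogonality for regular types to conclude $p\not\perp q$ and hence $p$ finite-dimensional. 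None of steps (i)--(iv) appears in your proposal, so as written it is a correct plan resting on an unavailable citation rather than a proof.
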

\begin{proof}
Assume $p$ is nonlocally modular. It suffices to prove that in this case $p$ is finite-dimensional, as then we can invoke Corollary~\ref{Zilfin}. As $p$ is a regular (by minimal rank) nonlocally modular type, by \cite[Theorem 3.17]{MooPiScan}, there is a definable (possibly over additional parameters) subgroup $G$ of the additive group $\mathbb G_a$ whose generic type $\mathfrak g_G$ is regular and nonorthogonal to $p$. Using Lascar inequalities, we see that if the Cantor normal form of $U(G)$ is
$$U(G)=\omega^{\beta_1}n_1+\dots +\omega^{\beta_k}n_k$$
with $\beta_1>\cdots>\beta_k\geq 0$ ordinals and the $n_i$'s positive integers, then $\beta_k=0$. That is, the Cantor form of $U(G)$ has a nonzero constant term. Indeed, towards a contradiction, assume $\beta_k\neq 0$. Since $p\not\perp \mathfrak g_G$, there is a set of parameters $A$ such that $a\nind_A b$ where $a$ and $b$ realise the nonforking extensions of $p$ and $\mathfrak g_G$ to $A$, respectively. On the one hand, Lascar inequalities says
$$U(a/Ab)+ U(b/A) \leq U(a,b/A)\leq U(a/Ab) \oplus U(b/A).$$
Since $p$ is minimal, $U(a/Ab)=0$, and so $U(a,b/A)=U(G)$. On the other hand, Lascar inequalities also yields
$$U(a,b/A) \leq U(b/Aa) \oplus U(a/A)= U(b/Aa)\oplus 1.$$
But $U(b/Aa)<U(G)$, and so, using that $\beta_k\neq 0$, we get $U(b/Aa) \oplus 1<U(G)$, a contradiction.

Now, by the Berline-Lascar decomposition theorem \cite[Theorem 6.7]{Poi}, there is a definable subgroup $H\leq G$ such that $U(G/H)=n_k$; in other words, $G/H$ has nonzero finite rank. As $G$ and $H$ are definable subgroups of the additive group, a result of Cassidy \cite[Proposition 11]{Cass} states that they are given as zero sets of linear homogeneous differential polynomials. Furthermore, if $f_1,\dots,f_n$ are such defining $H$, then the image of the map $(f_1,\dots,f_n):G\to \mathbb \U^{n}$ is a definable subgroup of $\mathbb G_a^n$ which is isomorphic to $G/H$; in other words, we may identify the quotient group $G/H$ with a definable subgroup of $\mathbb G_a^n$ and hence it is also defined by linear homogeneous differential equations. It then follows that $G/H$ is a $C_{\U}$-vector subspace of $\U^n$; from this we obtain that the generic type $\mathfrak g_{G/H}$ of $G/H$ must be \emph{finite-dimensional} (otherwise, $G/H$ would have infinite dimension as a $C_\U$-vector space, and any such space has infinite $U$-rank).

\medskip

\noindent {\bf Claim.} There is a finite-dimensional minimal type $q$ that is nonorthogonal to the generic type $\mathfrak g_{G/H}$ of $G/H$.

\begin{proof}[Proof of Claim]
Suppose $G$ and $H$ are defined over some algebraically closed differential field $L$ (note that then so is $G/H$). From the theory of coordinatisation in finite U-rank, see Lemma 5.1 of Chapter 2 in \cite{Pillay}, there is a (stationary) type $q$ with $U(q)=1$ such that $q\not\perp \mathfrak g_{G/H}$. In the proof of that lemma, $q$ is of the form $tp(c/E)$ for some $E>L$ where $c$ is interdefinable with $Cb(r)$ over $L$ and $r$ is a (forking) extension of $\mathfrak g_{G/H}$ with $U(r)=U(\mathfrak g_{G/H})-1$. Let $(a_i:i<\omega)$ be a Morley sequence in $r$, then $c\in Cb(r)\subseteq \dcl(a_i:i<\omega)$, see \cite[\S2, Lemma 2.28]{Pillay} for instance. As each $tp(a_i/E)$ is finite-dimensional (since $a_i\models \mathfrak g_{G/H})$, we obtain that $q=tp(c/E)$ is also finite-dimensional.
\end{proof}

As the quotient map $G\to G/H$ induces a definable map from $\mathfrak g_G$ to $\mathfrak g_{G/H}$, it follows that $\mathfrak g_G$ is also nonorthogonal to $q$. Now, to finish the proof, by transitivity of nonorthogonality for regular types (in this case $p$, $\mathfrak g_G$, and $q$), $p\not\perp q$. As $p$ is minimal, the finite-dimensionality of $q$ implies that $p$ is finite-dimensional as well. 
\end{proof}

\

%\bibliographystyle{plain}
%\bibliography{isolated}

\end{document}